\pdfoutput=1
\documentclass[letterpaper]{article}

\usepackage{geometry}
\usepackage[T1]{fontenc}
\usepackage[utf8]{inputenc}
\usepackage{newtxtext}
\usepackage{amsmath,amsthm}
\usepackage{newtxmath}
\usepackage[numbers]{natbib}
\usepackage{booktabs}
\usepackage{tikz}
\usepackage{listings}
\usepackage{xcolor}
\usepackage[scaled=0.85]{sourcecodepro}
\usepackage{xspace}
\usepackage[hyphens]{url}
\usepackage[hidelinks]{hyperref}
\hypersetup{%
  pdftitle={Biplanar graphs with independence number two are 9-colorable},%
  pdfauthor={Stefan Szeider},%
  pdfkeywords={biplanar graphs, thickness, Earth-Moon problem, chromatic number, SAT, Lean}}

\date{}

\lstdefinelanguage{lean}{
  keywords={theorem,lemma,def,structure,where,fun,by,exact,namespace,end,open,import,variable,Prop,Type},
  sensitive=true,
  comment=[l]{--},
  morecomment=[s]{/-}{-/},
}

\newtheorem{theorem}{Theorem}
\newtheorem{lemma}[theorem]{Lemma}
\newtheorem{corollary}[theorem]{Corollary}

\theoremstyle{definition}

\theoremstyle{remark}

\newcommand{\SB}{\{\,}
\newcommand{\SM}{\;{:}\;}
\newcommand{\SE}{\,\}}
\newcommand{\hy}{\hbox{-}\nobreak\hskip0pt}
\newcommand{\Lean}{Lean\xspace}
\newcommand{\CaDiCaL}{CaDiCaL\xspace}
\newcommand{\cnf}{\textsc{cnf}\xspace}
\DeclareMathOperator{\ee}{e}
\newcommand{\match}{\nu}

\title{Biplanar graphs with independence number two\\ are 9-colorable}
\author{Stefan Szeider\\[4pt]
  \small Algorithms and Complexity Group\\[-2pt]
  \small TU Wien, Vienna, Austria\\[-2pt]
  \small \href{https://ac.tuwien.ac.at/stefan-szeider/}{https://ac.tuwien.ac.at/stefan-szeider/}}

\begin{document}
\maketitle
\thispagestyle{empty}

\begin{abstract}
A graph is biplanar if it is the union of two planar graphs on the same vertex set.
The largest chromatic number of a biplanar graph is known to lie between 9 and 12.
The lower bound comes from Sulanke's graph, which has independence number 2, and a biplanar graph on 19 vertices with independence number 2 would have chromatic number at least 10.
Gethner and Sulanke asked in 2009 whether such a graph exists.
We show that it does not, and more generally that every biplanar graph with independence number at most 2 is 9-colorable.
The proof embeds a hypothetical counterexample in the union of two sphere triangulations, enumerates with SAT modulo symmetries the 3271 graphs that pass a necessary filter for the complement of such a union, and shows with a SAT solver that none of them is such a complement; a matching argument reduces the general statement to this computation and one further case on 18 vertices.
The computational part of the proof, including the completeness of the enumeration and every refutation, is checked in \Lean~4, assuming three classical facts about planar graphs.
The \Lean development, the SAT instances, and the enumeration certificates are available on Zenodo.
\end{abstract}

\section{Introduction}
\label{sec:intro}

A graph is \emph{biplanar} if its edge set is the union of the edge sets of two planar graphs on the same vertex set (i.e., if its thickness is at most two).
Ringel~\citep{ringel1959,jacksonringel2000} asked for the largest chromatic number of a biplanar graph; in the map formulation that gave the problem its name, every country on Earth owns a colony on the Moon, and a country and its colony receive the same color~\citep{hutchinson1993,gethner2018}.
A biplanar graph on $n\ge 3$ vertices has at most $6n-12$ edges~\citep{mutzelodenthalscharbrodt1998} and hence a vertex of degree at most 11; the same holds for every subgraph, so biplanar graphs are 11-degenerate and 12 colors always suffice.
Sulanke found in 1973 that the join of a 5-cycle and a complete graph on 6 vertices is biplanar; it has chromatic number~9, and it appeared in Gardner's column of 1980~\citep{gardner1980,gethnersulanke2009}.
Since then the answer has been known to lie in $\{9,10,11,12\}$.
Gethner~\citep{gethner2018} conjectures that the answer is 11, and it is not even known whether 11 colors suffice~\citep[Problem~2]{barbados2018}.

The lower bound rests on graphs with small independence number.
If the complement of a graph~$G$ on $n$ vertices contains no complete graph on $m\ge 2$ vertices, then every color class of $G$ has fewer than $m$ vertices and $\chi(G)\ge\lceil n/(m-1)\rceil$~\citep{gethnersulanke2009}.
Sulanke's graph has independence number 2, and so do the eight 9-critical biplanar graphs on 17 vertices with triangle-free complement found by Gethner and Sulanke~\citep{gethnersulanke2009} and the thirty 17-vertex examples among the 90 further 9-critical graphs found by Alalqam~\citep{alalqam2012}.
Further 9-critical biplanar graphs on 12 to 16 vertices are given by Boutin, Gethner, and Sulanke~\citep{boutingethnersulanke2008}.
For independence number 2 the bound reads $\chi(G)\ge\lceil n/2\rceil$, which first exceeds 9 at $n=19$.
Gethner and Sulanke~\citep[Open Problem~1(a)]{gethnersulanke2009} therefore posed the problem: ``Find a thickness-two graph on 19 vertices with triangle-free complement.''
The candidate named by Gethner~\citep{gethner2018} is $C_5[4,4,4,4,3]$, the 5-cycle with its vertices replaced by cliques of sizes $4,4,4,4,3$, where the cliques of two adjacent cycle vertices are joined by all edges; the notation is that of Albertson, Boutin, and Gethner~\citep{albertsonboutingethner2010}.
It was shown not to be biplanar by a SAT computation~\citep{kirchwegerscheucherszeider2023} and by an edge count that is part of a proof that every biplanar clique blow-up of the 5-cycle is 9-colorable~\citep{trivedi2026}.
Heuristic searches aimed directly at 19 vertices and triangle-free complement produced no such graph~\citep{alalqam2012,flower2017,weaver2023}.

A direct attack on the 19-vertex question is out of reach.
Deciding whether a graph is biplanar is NP-complete~\citep{mansfield1983}; biplanar graphs can contain expanders~\citep{dsw2016}, so biplanarity alone yields no planar-style separator theorem~\citep{hendreywood2019}; and there are far too many graphs on 19 vertices with independence number 2 to test them one by one.
Our proof reduces them to 3271 candidates for the complement of a union of two triangulations and refutes each candidate with a certificate that can be checked.

\begin{theorem}
\label{thm:main}
Every biplanar graph with independence number at most $2$ is 9-colorable.
In particular, every biplanar graph with independence number at most $2$ has at most $18$ vertices.
\end{theorem}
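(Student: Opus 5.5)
The plan splits the statement into a structural reduction by matchings and a computational core. For a graph $G$ with $\alpha(G)\le 2$ on $n$ vertices, the color classes are exactly the singletons and the non-edges of $G$, i.e.\ the edges of the complement $\overline{G}$. So $\chi(G)=n-\match(\overline{G})$, where $\match$ denotes the matching number. The first part of the theorem is therefore the inequality $\match(\overline{G})\ge n-9$, and the second part follows at once: on $19$ vertices $\match(\overline{G})\le 9$, giving $\chi(G)\ge 10$.

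First I reduce to a small number of vertices. By the Tutte--Berge formula, if $\match(\overline{G})<n-9$ there is a set $S$ whose removal leaves many odd components in $\overline{G}-S$. Since $\overline{G}$ is triangle-free, these components are pairwise completely joined in $G$. Vertices of $G$ taken from distinct components form large cliques, and $K_{k}$ is not biplanar for $k\ge 9$, which gives a strong restriction. I would show that a minimal counterexample $G$ (with respect to deleting a vertex, or a pair forming a matching edge of $\overline{G}$, which lowers $n$ and $\chi$ by one each) has either $n=19$, or $n=18$ with $\overline{G}$ having no perfect matching. The key step is that deleting both ends of a complement edge preserves both biplanarity and $\alpha\le2$, so if $\chi(G)\ge 10$ we can shrink until $n\in\{19,18\}$ while keeping $\chi=n-\match$ too large. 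The $18$-vertex case without a perfect matching would then be handled as a separate, smaller computation, or excluded by the Tutte set argument combined with the edge bound $6n-12=96$ against $\binom{18}{2}-|E(\overline{G})|$.

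For the core case $n=19$, edge counting already helps. Since $G$ has at most $6\cdot19-12=102$ edges, $\overline{G}$ has at least $171-102=69$ edges and is triangle-free. By Ramsey $R(3,?)$ style bounds and Mantel's bound of $90$ edges, $\overline{G}$ is a dense triangle-free graph. Adding edges to each planar layer until it is a triangulation only removes complement edges, so we may assume $G$ is the union of two sphere triangulations $T_1\cup T_2$ and $\overline{G}$ is triangle-free, with exactly $102-|E(T_1\cap T_2)|$ edges in $G$. I would derive necessary conditions for such complements: minimum degree bounds, since each vertex has degree at least $3$ in each triangulation; the fact that the neighborhood in each $T_i$ is a cycle; and the fact that $\overline{G}$ restricted to any $K_5$-free or $K_{3,3}$-related pattern is constrained. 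I would then enumerate triangle-free graphs on $19$ vertices meeting these filters up to isomorphism with SAT modulo symmetries. For each candidate, I would encode as SAT the existence of two triangulations, via rotation systems or an ordering-based planarity encoding, whose union avoids all edges of the candidate, and certify unsatisfiability with DRAT proofs checked in Lean.

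The main obstacle is making the filter strong enough that the enumeration terminates with few candidates, while still being provably necessary. The planarity encoding for each refutation is a second difficulty. For the filter, I would first try the combination of triangle-freeness, the minimum edge count $69$, and maximum-degree constraints coming from $\deg_G(v)\ge 6$.
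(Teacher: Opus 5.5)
There is a genuine gap. Your identity $\chi(G)=n-\nu(\overline{G})$, your derivation of the order bound from the coloring bound, and your plan for $n=19$ all agree with the paper: complete both layers to sphere triangulations, enumerate the complements with SMS, and refute each candidate by SAT with checked proofs.

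The gap is the reduction that is supposed to leave only $n\in\{18,19\}$. Deleting a vertex only decreases the order, and so does deleting both ends of a complement edge (which lowers $n$ by two, not one). So ``shrink until $n\in\{19,18\}$'' runs in the wrong direction. A vertex-critical $10$-chromatic subgraph may a priori have any order from $10$ to $19$, and nothing in your proposal excludes orders $10$ to $17$. A Tutte--Berge barrier $S$ does not settle this by itself, since the vertices of $S$ are unconstrained and the clique argument only controls the components of $\overline{G}-S$.

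The missing idea is to use vertex-criticality to make the barrier empty. Take $G'$ induced in $G$ and minimal with $\chi(G')\ge 10$, and let $H=\overline{G'}$. By the order bound and the 19-vertex computation, $H$ has $n\le 18$ vertices. Then $\nu(H-v)=\nu(H)=n-10$ for every $v$, so every vertex is missed by some maximum matching. By Gallai--Edmonds, every component of $H$ is factor-critical. Being triangle-free, each is a single vertex or has odd order at least $5$. So $H$ consists of $r$ isolated vertices and $k$ components $H_j$ of odd order at least $5$, with $r+k=20-n$ from $\nu(H)=(n-r-k)/2$. Three bounds then finish the job:
\begin{itemize}
\item $r+2k\le 8$: the isolated vertices, together with two vertices non-adjacent in $H$ from each component, form a clique of $G'$, and $K_9$ is not biplanar.
\item $\deg_{H_j}(v)\le 8-r-2(k-1)$, by the same clique argument.
\item $\binom{n}{2}-\ee(H)\le 6n-12$.
\end{itemize}
A short case table leaves only $n=18$, $r=1$, $k=1$, that is, $G'$ has a universal vertex.

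That surviving case is not excluded by counting, as you suggest it might be: it gives $57\le\ee(H)\le 59$, which is consistent. So a second computation is unavoidable. It is the universal-vertex structure that reduces it to four candidates: an isolated vertex in $H$, and maximum degree at most $7$ elsewhere.

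Two smaller points concern $n=19$. First, your filter (triangle-free, at least $69$ edges, degrees from $\deg_G(v)\ge 6$) is far too weak for a feasible enumeration. Since the $\overline{G}$-neighborhood of a vertex is a clique of $G$, you get $\Delta(\overline{G})\le 8$, hence at most $76$ edges. You also get $\alpha(\overline{G})\le 8$, which excludes every bipartite complement. With these conditions the paper finds $3271$ candidates. Second, the SAT test should ask for an exact union, $T_1\cup T_2=\overline{H}$ with every edge of $\overline{H}$ in some layer. Asking merely for two triangulations that avoid the edges of $H$ is a weaker condition and need not be refutable.
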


The main computational result is the following theorem, which answers Open Problem~1(a) of Gethner and Sulanke in the negative.

\begin{theorem}
\label{thm:nineteen}
No biplanar graph on $19$ vertices has independence number at most $2$.
\end{theorem}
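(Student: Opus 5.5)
Suppose, for contradiction, that $G$ is a biplanar graph on $19$ vertices with $\alpha(G)\le 2$. Write $G=P_1\cup P_2$ with $P_1,P_2$ planar on the common vertex set $V$, $|V|=19$. Adding edges preserves $\alpha\le 2$, so we may replace each $P_i$ by a maximal planar supergraph, a sphere triangulation $T_i$ on $V$ with $3\cdot 19-6=51$ edges. Then $H=T_1\cup T_2$ is still biplanar, has at most $102$ edges, and satisfies $\alpha(H)\le 2$. Hence the complement $\overline{H}$ is triangle-free, has at least $\binom{19}{2}-102=69$ edges, and is contained in $\overline{G}$. The problem therefore becomes: show that no triangle-free graph $F$ on $19$ vertices is the complement of the union of two triangulations of the sphere.

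\textbf{Filter.} The first step is a cheap necessary condition on $F=\overline{H}$ that can drive an enumeration. I would use the following.
\begin{itemize}
\item $F$ is triangle-free.
\item $|E(F)|\ge 69$, with equality exactly when $T_1$ and $T_2$ are edge-disjoint.
\item Every vertex has $F$-degree at most $18-3=15$, since it has degree at least $3$ in $T_1$.
\item Each vertex has degree at most $2\cdot 18-\deg_F(v)$ summed over the two triangulations, hence a local counting constraint.
\item Every $4$-set of vertices spans at most $2$ non-$H$ edges in a structured way.
\end{itemize}
The strongest usable condition is planarity-flavoured. For every vertex subset $S$, the graph $H[S]$ has at most $6|S|-12$ edges, so $\overline{F}[S]$ has at most $6|S|-12$ edges. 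Equivalently, $F[S]$ has at least $\binom{|S|}{2}-6|S|+12$ edges, which bites for $|S|\ge 13$ or so. Combined with triangle-freeness, which forces $\alpha(F)$ to be large by Ramsey-type bounds ($R(3,k)$), this should be very restrictive.

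\textbf{Enumeration and refutation.} I would enumerate all such $F$ up to isomorphism with SAT modulo symmetries, encoding triangle-freeness, the edge and degree bounds, and the subset edge constraints as a CNF with lexicographic-minimality symmetry breaking. The paper reports that $3271$ graphs survive. For each survivor $F$, I would build a SAT instance asking for two triangulations covering $\overline{F}$. One option is to assign each edge of $\overline{F}$ to $T_1$ or $T_2$ and require each part to be planar. Planarity can be encoded via Schnyder woods or an order-dimension-three encoding: a graph is planar iff its incidence poset has dimension at most $3$, which gives a compact CNF with three linear orders. I would then show each instance is UNSAT, with proof certificates (DRAT/LRAT) checked independently.

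\textbf{Main obstacle.} The hard part is the completeness of the enumeration. One must argue that the filter is genuinely implied by the reduction, and that the symmetry-breaking keeps at least one representative of every isomorphism class. This relies on the classical facts: edge maximality of triangulations, the $3n-6$ bound, and the correctness of the planarity encoding. These are the places where I would expect to have to assume standard planar-graph theorems rather than re-prove them. If the enumeration with only global constraints is too large, I would first strengthen the filter by fixing the degree sequence of $F$. Triangle-free graphs with $69$ or more edges on $19$ vertices and bounded complement density are close to bipartite, so I would case-split by a maximum independent set of $F$, that is, a maximum clique of $H$, which has size at most $12$ since $K_{13}$ is not biplanar. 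This can be used to bound the search further.
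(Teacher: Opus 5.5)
Your reduction to $H=T_1\cup T_2$ and the overall plan (filter, SMS enumeration of $F=\overline{H}$, one SAT refutation per survivor) match the paper. However, \textbf{your filter omits the ingredient that makes the enumeration feasible: the non-biplanarity of $K_9$} (Battle--Harary--Kodama, Tutte). Because $F$ is triangle-free, $N_F(v)$ is independent in $F$, hence a clique in the biplanar graph $H$, so $\deg_F(v)\le 8$. This gives $69\le |E(F)|\le 76$ (at most seven edges shared by $T_1$ and $T_2$) and $\alpha(F)\le 8$.

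Your conditions are much weaker. The inequality $|E(F[S])|\ge\binom{|S|}{2}-6|S|+12$ first has content at $|S|=11$. It therefore yields only $\alpha(F)\le 10$, hence $\Delta(F)\le 10$, and no upper bound on $|E(F)|$ beyond Mantel's $90$. Your clique bound via $K_{13}$ is weaker still, since edge counting alone already excludes $K_{11}$.

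The count $3271$ is the output of the paper's filter, not of yours. Yours admits, for example, every $K_{10,9}-M$ with $|M|\le 21$ and $\Delta(M)\le 4$. These graphs are bipartite, have at least $69$ edges, and satisfy every one of your subset inequalities; the binding cases are $S$ consisting of one full side plus a few vertices of the other. By a crude count this family alone has tens of millions of isomorphism classes. Each would need its own SAT refutation of a graph $H\supseteq K_{10}$ whose non-biplanarity rests on that of $K_9$. So the enumeration step as you set it up is not backed by the reported computation and is not plausibly feasible. The missing idea is exactly the degree bound above.

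Separately, the bullet ``every $4$-set spans at most $2$ non-$H$ edges'' is false and would make the filter unsound. Requiring every $4$-set to span at most two $F$-edges forbids $K_{1,3}\subseteq F$, which forces $\Delta(F)\le 2$ and $|E(F)|\le 19<69$. It would empty the candidate list for no valid reason, since triangle-free graphs may well contain $K_{1,3}$, $P_4$ or $C_4$. The ``local counting'' bullet is not a constraint as written.

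Your refutation step also differs from the paper's, though not fatally. You encode planarity of two layers via Schnyder's dimension theorem, which adds that theorem to what must be trusted. The paper instead encodes only that $H$ is the union of two sphere triangulations: face variables, each layer edge in exactly two faces, and lazily added link and connectivity cuts. That avoids a direct planarity encoding altogether.
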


Theorem~\ref{thm:main} concerns graphs of every order, but only finitely many cases need a computation.
If~$G$ is a biplanar graph on $n$ vertices with $\alpha(G)\le 2$, then $\overline{G}$ is triangle-free, so the neighborhood of each vertex in $\overline{G}$ is a clique of~$G$; since $K_9$ is not biplanar, $\overline{G}$ has maximum degree at most 8 and hence at most $4n$ edges, while $G$ has at most $6n-12$, and together these bounds force $n\le 19$.
Theorem~\ref{thm:nineteen} excludes $n=19$, and for $n\le 18$ a minimal counterexample to 9-colorability reduces, by a matching argument in the complement and an arithmetic case analysis, to an 18-vertex graph with a vertex adjacent to all others, which the next lemma excludes.

\begin{lemma}
\label{lem:apex}
No biplanar graph on $18$ vertices with a vertex adjacent to all other vertices has independence number at most $2$.
\end{lemma}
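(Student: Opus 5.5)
Suppose $G$ is a biplanar graph on $18$ vertices with a vertex $v$ adjacent to all others and $\alpha(G)\le 2$. Then $H=\overline{G}$ is triangle-free, $v$ is isolated in $H$, and $H'=H-v$ is a triangle-free graph on $17$ vertices. I would first derive strong numerical constraints from the counting argument sketched above, and then finish with a computation of the same type as for Theorem~\ref{thm:nineteen}.

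\textbf{Edge counting.} Since $G$ is biplanar on $18$ vertices, $e(G)\le 6\cdot 18-12=96$. Hence
\[
e(H)\ge \binom{18}{2}-96=57 .
\]
Every edge of $H$ lies in $H'$. Triangle-freeness makes each $H$-neighborhood a clique of $G$, and $K_9$ is not biplanar, so $\Delta(H')\le 8$ and $e(H')\le 68$. This is slack but informative.

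Next I use the apex. Writing $G=P_1\cup P_2$ with both $P_i$ planar, $v$ need not be adjacent to everything in a single $P_i$, so the apex does not directly make a $P_i-v$ outerplanar. Instead I would extend each $P_i$ to a triangulation $T_i$ on the same $18$ vertices, as in the main proof. The complement of $T_1\cup T_2$ is then a subgraph of $H$ that has $v$ isolated. Since $e(T_1\cup T_2)\le 96$, at most $96-17=79$ edges of $T_1\cup T_2$ lie among the other $17$ vertices. So the complement restricted to $H'$ has at least $136-79=57$ edges, is triangle-free, and has maximum degree at most $8$.

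Triangle-free graphs on $17$ vertices with $\ge 57$ edges and $\Delta\le 8$ are fairly dense. They are close to bipartite-like structures, such as Ramsey-type $R(3,\cdot)$ graphs with independence number considerations.

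\textbf{Computation.} I would run the same pipeline used for Theorem~\ref{thm:nineteen}:
\begin{enumerate}
\item Enumerate, with SAT modulo symmetries, the triangle-free graphs $F$ on $17$ vertices with $\Delta(F)\le 8$ and at least $57$ edges, together with the further necessary filters used there (for example, degree conditions forced by the two triangulations).
\item For each candidate $F$, encode as a SAT instance the existence of two triangulations $T_1,T_2$ on $F$'s vertex set plus an extra vertex $v$ such that $T_1\cup T_2$ covers every non-edge of $F$ and every pair $\{v,x\}$.
\item Refute each instance with \CaDiCaL and check the certificate in \Lean.
\end{enumerate}
The apex condition is strong in this encoding: $v$ must be adjacent to all $17$ vertices in $T_1\cup T_2$, so $\deg_{T_1}(v)+\deg_{T_2}(v)\ge 17$.

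\textbf{Main obstacle.} I expect the hardest step to be keeping the enumeration in the first step small enough. If direct enumeration is too large, I would exploit the apex vertex via a matching or partition split. Partition the $17$ vertices according to whether $vx\in T_1$ or $vx\in T_2$. Then $T_i-v$ has the link cycle of $v$ as a near-outerplanar portion. These constraints can be added symmetry-breakingly to the SAT encoding, reducing the case to something the solver refutes.
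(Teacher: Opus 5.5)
Your outline follows the paper's own route. It enlarges both layers to sphere triangulations (Lemma~\ref{lem:enlarge}), passes to the complement of $U=T_1\cup T_2$, enumerates the possible complements with SMS, and refutes each with the exact-union SAT encoding, checking the LRAT proofs in \Lean. What is missing is the observation that keeps the enumeration small, and that is exactly the step you flag as the main obstacle.

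Write $H'=\overline{U}-v$. You use the universal vertex $v$ only for the lower bound $\ee(H')\ge 57$, and you keep the degree bound $8$ from the 19-vertex case. But $v$ is adjacent in $U$ to every other vertex, so it can be added to every clique of $U-v$. For $x\ne v$, the set $N_{H'}(x)\cup\{v\}$ is independent in $\overline{U}$, hence a clique of $U$. Since $U$ is biplanar and $K_9$ is not, this gives $\deg_{H'}(x)\le 7$. More generally, every independent set of $H'$ has at most $7$ vertices. Hence $57\le\ee(H')\le\lfloor 17\cdot 7/2\rfloor=59$ (Lemma~\ref{lem:orders}(2)). So the two triangulations share at most $\ee(H')-57\le 2$ edges, and with this filter SMS returns only $4$ candidates.

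Your explicit filter (triangle-free, $\Delta\le 8$, $\ee\ge 57$) is sound but much weaker. For example, it admits every graph obtained from $K_{8,9}$ by deleting between $8$ and $15$ edges so that each vertex on the side of size $8$ loses at least one. These graphs are triangle-free, have maximum degree at most $8$ and at least $57$ edges, yet their complements contain $K_9$. Even importing the 19-vertex condition $\alpha\le 8$ for $H'$ alone still leaves the window $57\le\ee(H')\le 68$. All such candidates would reach the SAT stage, where the cheap $K_9$ argument is unavailable and they must be refuted from the triangulation constraints alone.

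The remedies you sketch (splitting the neighbors of $v$ by layer, near-outerplanarity of $T_i-v$) are not worked out and are not needed; the one-line clique argument above is the missing idea. A smaller point: your candidate $F$ is the exact complement of $U-v$. The SAT instance should therefore require $T_1\cup T_2$ to equal $\overline{F}$ plus the star at $v$, not merely to contain it. Equality is what fixes the number of doubled edges in the encoding.
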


\begin{corollary}
\label{cor:alpha}
Every biplanar graph on at least $19$ vertices has an independent set of size $3$, and every biplanar graph with chromatic number at least $10$ has independence number at least $3$.
\end{corollary}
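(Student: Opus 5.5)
The plan is to derive Corollary~\ref{cor:alpha} directly from Theorem~\ref{thm:main} and Theorem~\ref{thm:nineteen}, using only closure of biplanarity under induced subgraphs.

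For the first claim, let $G$ be biplanar on $n\ge 19$ vertices. Pick any $19$ of its vertices and let $H$ be the subgraph they induce. Deleting vertices from both planar graphs in a decomposition of $G$ keeps each part planar, so $H$ is biplanar. By Theorem~\ref{thm:nineteen}, $H$ does not have independence number at most $2$. Hence $H$ contains an independent set of size $3$. Since $H$ is an induced subgraph, this set is also independent in $G$. Alternatively, the second sentence of Theorem~\ref{thm:main} gives this immediately: a biplanar graph with $\alpha\le 2$ has at most $18$ vertices.

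For the second claim, let $G$ be biplanar with $\chi(G)\ge 10$. If $\alpha(G)\le 2$, then Theorem~\ref{thm:main} makes $G$ 9-colorable, which is a contradiction. So $\alpha(G)\ge 3$.

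There is no real obstacle here. The only point to state explicitly is that biplanarity is inherited by induced subgraphs, which is needed in the first claim to pass from $G$ to a 19-vertex subgraph.
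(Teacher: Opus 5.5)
Your proof is correct and matches the paper's: the paper obtains the two parts as the contrapositives of the order bound and the coloring bound of Theorem~\ref{thm:main}, respectively. It also remarks that the order bound follows directly from Theorem~\ref{thm:nineteen} and~(X1), which is exactly your induced-subgraph argument.
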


Theorem~\ref{thm:nineteen} and Lemma~\ref{lem:apex} are proved by computer, and the proofs are checked in the proof assistant \Lean~4~\citep{demoura2021}.
Theorem~\ref{thm:main} follows from them by a short argument on paper (Section~\ref{sec:main}), whose arithmetic part is again checked in \Lean.

\paragraph{The proof idea.}
If $G$ is a biplanar graph on 19 vertices with independence number at most 2, then each of its two planar layers extends to a triangulation of the sphere on the same 19 vertices, and the union $U$ of the two triangulations is again a biplanar graph with independence number at most 2.
The complement $H$ of $U$ is triangle-free and has no independent set of size 9, its maximum degree is at most 8 because $K_9$ is not biplanar, and counting edges shows that it has between 69 and 76 edges (Section~\ref{sec:reduction}).
Up to isomorphism, 3271 graphs satisfy these necessary conditions; we enumerate them with SAT modulo symmetries and certify in \Lean that every graph satisfying the conditions is isomorphic to one on the list (Section~\ref{sec:enum}).
The enlargement to $U$ turns the condition ``biplanar,'' whose direct encoding would have to enforce planarity of two layers, into the condition that certain triangles are the faces of two triangulations.
For each $H$ we formulate the latter as a propositional formula and refute it with a SAT solver (Section~\ref{sec:exact}).

\paragraph{What is verified.}
Every object that enters the proof of Theorem~\ref{thm:nineteen} and Lemma~\ref{lem:apex} is defined in \Lean: the reduction, the filter that describes the candidate complements, the propositional encoding of the face condition, and the list of candidates.
The SAT solver's refutations are imported into \Lean as LRAT proofs (a clausal proof format with hints that a checker can verify independently of the solver~\citep{cruzfilipe2017}) and checked by \Lean's own verified checker.
The final theorems depend on three classical facts about planar graphs, stated as hypotheses~(X1) to~(X3) in Section~\ref{sec:reduction}; planarity itself is not formalized.
Table~\ref{tab:trust} in Section~\ref{sec:lean} lists what is verified and what is trusted.
The \Lean development, the instances and the enumeration certificates are available on Zenodo~\citep{zenodo}.

\paragraph{Related verification work.}
Resolving a combinatorial question by SAT and certifying the result has precedents: Keller's conjecture~\citep{brakensiek2022}, the empty hexagon number~\citep{heulescheucher2024}, and the packing chromatic number of the infinite square grid~\citep{subercaseauxheule2023}.
The reduction for Keller's conjecture was later formalized~\citep{clune2023}, Gallicchio et al.~\citep{gallicchio2026} extended the formalization to the SAT encoding and the symmetry breaking, and Subercaseaux et al.~\citep{subercaseaux2024} formalized the empty hexagon result.
Our development follows the template of these two \Lean formalizations, which verify their encodings, and differs as follows.
Subercaseaux et al.\ pass their refutations through the verified external checker cake\_lpr~\citep{tanheulemyreen2023} and assert unsatisfiability of the formula as an axiom in \Lean.
Gallicchio et al.\ check the refutations for dimensions up to~5 inside \Lean, by compiled evaluation as we do; for dimensions 6 and~7, the solver's proofs and the symmetry-breaking proofs are checked by separate verified checkers, and in their published artifact these verdicts do not enter the \Lean theorem.
Unlike both, every refutation enters the final \Lean theorem: the LRAT proofs of the 3275 refutations in Section~\ref{sec:exact} are streamed from the solver into \Lean with lrat-catcher~\citep{lratcatcher}, so no proof file is stored.
The completeness of the enumeration, which rests on symmetry breaking, is verified in \Lean as well.
For this we extend LeanSMS~\citep{kirchwegermanriqueszeider2026}, which verifies non-existence results obtained with SAT modulo symmetries and names certified enumeration up to isomorphism as future work; earlier SMS work certified enumerations outside \Lean~\citep{kirchwegerszeider2024}.

\section{The reduction}
\label{sec:reduction}

All graphs are finite and simple.
For a graph $G$ we write $\overline{G}$ for its complement, $\ee(G)$ for its number of edges, $\deg_G(v)$ for the degree of a vertex $v$, $\alpha(G)$ for its independence number, $\chi(G)$ for its chromatic number, and $\match(G)$ for the size of a largest matching.
The \emph{join} $A\vee B$ of two graphs is obtained from disjoint copies of $A$ and $B$ by adding all edges between them.
In this notation, $G$ on vertex set $V$ is biplanar if there are planar graphs $P_1$ and $P_2$ on $V$ with $E(G)\subseteq E(P_1)\cup E(P_2)$; by~(X1) below we may assume equality.
A graph $G$ has $\alpha(G)\le 2$ if and only if $\overline{G}$ is triangle-free.

We use three facts about planar graphs.
\begin{enumerate}
\item[(X1)] Subgraphs and induced subgraphs of planar graphs are planar.
\item[(X2)] A planar graph on $n\ge 4$ vertices is a spanning subgraph of a planar sphere triangulation on the same vertex set.
Here a \emph{sphere triangulation} is a connected graph $T$ on $n$ vertices with $3n-6$ edges together with a set of \emph{faces}, which are triangles of $T$, such that every edge of $T$ lies in exactly two faces and, for every vertex $v$, the \emph{link} of $v$ is a single cycle; the link of $v$ is the graph on the neighbors of $v$ in which $a$ and $b$ are adjacent if $\{v,a,b\}$ is a face.
\item[(X3)] $K_9$ is not biplanar~\citep{bhk1962,tutte1963,biniaz2022}.
\end{enumerate}
A sphere triangulation is a combinatorial object, and planarity of $T$ enters the formal proof only through~(X2).
The combinatorial conditions do force a sphere: counting incidences gives $2n-4$ faces, the faces form a connected closed surface because every link is a single cycle, and its Euler characteristic is $n-(3n-6)+(2n-4)=2$.
Since this is not formalized, biplanarity is carried as a separate hypothesis wherever it is needed.
A triangle of $T$ need not be a face (it may be a separating triangle).

\begin{lemma}[enlargement]
\label{lem:enlarge}
Let $G$ be a biplanar graph on $n\ge 4$ vertices with $\alpha(G)\le 2$.
Then there are sphere triangulations $T_1$ and $T_2$ on $V(G)$ such that $U=T_1\cup T_2$ satisfies $G\subseteq U$, $\alpha(U)\le 2$, and~$U$ is biplanar.
\end{lemma}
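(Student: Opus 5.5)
By the definition of biplanarity together with~(X1), we begin by writing $E(G)=E(P_1)\cup E(P_2)$ with $P_1,P_2$ planar graphs on $V=V(G)$. Since $n\ge 4$, fact~(X2) applies to each layer separately. It gives sphere triangulations $T_1\supseteq P_1$ and $T_2\supseteq P_2$, each a spanning supergraph on the same vertex set $V$. We then set $U=T_1\cup T_2$, the graph on $V$ with edge set $E(T_1)\cup E(T_2)$.

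We check the three claimed properties in turn.
\begin{enumerate}
\item[(a)] \emph{$G\subseteq U$.} This is immediate, since
\[
E(G)=E(P_1)\cup E(P_2)\subseteq E(T_1)\cup E(T_2)=E(U).
\]
\item[(b)] \emph{$U$ is biplanar.} Each $T_i$ is planar; this is part of the conclusion of~(X2), which produces a \emph{planar} sphere triangulation, and it is not derived from the combinatorial axioms. So $U$ is the union of two planar graphs on $V$, which is exactly the definition of biplanar.
\item[(c)] \emph{$\alpha(U)\le 2$.} Independence number is antimonotone under adding edges on a fixed vertex set. Every independent set of $U$ is independent in $G$, because $E(G)\subseteq E(U)$. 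Hence $\alpha(U)\le\alpha(G)\le 2$. Equivalently, $\overline U\subseteq\overline G$, and a subgraph of a triangle-free graph is triangle-free.
\end{enumerate}

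There is no real obstacle here; the only point needing care is the bookkeeping of what ``sphere triangulation'' means in the paper. Fact~(X2) must deliver the combinatorial object: the graph $T_i$ with $3n-6$ edges, a face set in which every edge lies in exactly two faces, and single-cycle links. It must also deliver planarity of $T_i$ as a separate fact, since planarity is not formalized and is carried as its own hypothesis. We would state explicitly that~(X2) is applied once for each layer on the common vertex set $V$, so that $T_1$ and $T_2$ live on $V(G)$ as required.

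The hypothesis $n\ge 4$ is used only to invoke~(X2).
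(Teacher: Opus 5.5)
Your proof is correct and is essentially the paper's argument: extend each planar layer by~(X2) to a planar sphere triangulation on $V(G)$, take $U=T_1\cup T_2$, and note that adding edges cannot increase the independence number. One small remark: you do not need (X1) to get equality $E(G)=E(P_1)\cup E(P_2)$, since the inclusion $E(G)\subseteq E(P_1)\cup E(P_2)$ already gives $G\subseteq U$.
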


\begin{proof}
Extend the two planar layers of $G$ by~(X2).
Adding edges does not increase the independence number.
\end{proof}

We call a graph $U$ an \emph{exact union} if $U=T_1\cup T_2$ for two sphere triangulations on $V(U)$.
Lemma~\ref{lem:enlarge} uses the completion of a layer to a triangulation, which Tutte performs by transferring edges from the other layer in his proof that $K_9$ is not biplanar~\citep{tutte1963} and which Battle, Harary, and Kodama obtain from F\'ary's theorem~\citep{bhk1962}; here both layers are completed, and they may share edges.
Figure~\ref{fig:exactunion} shows Sulanke's graph, which is itself an exact union.

\begin{figure}[t]
\centering
\begin{tikzpicture}[scale=0.8041,
  edge/.style={line width=0.4pt,gray!65},
  hedge/.style={line width=0.6pt,black},
  dbl/.style={line width=1.25pt,black},
  shade/.style={fill=gray!22},
  vtx/.style={circle,minimum size=0.41cm,inner sep=0pt,font=\scriptsize,line width=0.5pt,transform shape},
  cyc/.style={vtx,fill=black!55,text=white,draw=black!55},
  clq/.style={vtx,fill=white,draw=black},
  ttl/.style={font=\normalsize,anchor=base}]
\begin{scope}[xshift=0.000cm]
\fill[shade] (-0.129,-1.543) -- (0.322,-0.595) -- (2.520,-2.310) -- cycle;
\draw[edge] (0.086,1.023) -- (-0.000,2.310);
\draw[dbl] (0.086,1.023) -- (-0.640,0.257);
\draw[edge] (0.086,1.023) -- (0.147,0.175);
\draw[dbl] (0.086,1.023) -- (2.520,-2.310);
\draw[edge] (-0.000,2.310) -- (-0.640,0.257);
\draw[edge] (-0.000,2.310) -- (2.520,-2.310);
\draw[edge] (-0.000,2.310) -- (-2.520,-2.310);
\draw[edge] (-0.834,-1.867) -- (-0.129,-1.543);
\draw[edge] (-0.834,-1.867) -- (-0.640,0.257);
\draw[edge] (-0.834,-1.867) -- (-1.186,-1.183);
\draw[edge] (-0.834,-1.867) -- (2.520,-2.310);
\draw[edge] (-0.834,-1.867) -- (-2.520,-2.310);
\draw[edge] (-0.129,-1.543) -- (0.322,-0.595);
\draw[edge] (-0.129,-1.543) -- (-0.640,0.257);
\draw[edge] (-0.129,-1.543) -- (0.611,-1.335);
\draw[edge] (-0.129,-1.543) -- (2.520,-2.310);
\draw[edge] (0.322,-0.595) -- (-0.640,0.257);
\draw[edge] (0.322,-0.595) -- (0.611,-1.335);
\draw[edge] (0.322,-0.595) -- (0.147,0.175);
\draw[edge] (0.322,-0.595) -- (2.520,-2.310);
\draw[edge] (-0.640,0.257) -- (0.147,0.175);
\draw[edge] (-0.640,0.257) -- (-1.186,-1.183);
\draw[edge] (-0.640,0.257) -- (-2.520,-2.310);
\draw[dbl] (0.611,-1.335) -- (2.520,-2.310);
\draw[edge] (0.147,0.175) -- (2.520,-2.310);
\draw[dbl] (-1.186,-1.183) -- (-2.520,-2.310);
\draw[edge] (2.520,-2.310) -- (-2.520,-2.310);
\node[cyc] at (0.086,1.023) {$c_1$};
\node[cyc] at (-0.000,2.310) {$c_2$};
\node[cyc] at (-0.834,-1.867) {$c_3$};
\node[cyc] at (-0.129,-1.543) {$c_4$};
\node[cyc] at (0.322,-0.595) {$c_5$};
\node[clq] at (-0.640,0.257) {$k_1$};
\node[clq] at (0.611,-1.335) {$k_2$};
\node[clq] at (0.147,0.175) {$k_3$};
\node[clq] at (-1.186,-1.183) {$k_4$};
\node[clq] at (2.520,-2.310) {$k_5$};
\node[clq] at (-2.520,-2.310) {$k_6$};
\node[ttl] at (0,-3.29) {$T_1$};
\end{scope}
\begin{scope}[xshift=6.080cm]
\draw[edge] (0.000,1.172) -- (0.000,2.310);
\draw[dbl] (0.000,1.172) -- (-0.062,0.129);
\draw[edge] (0.000,1.172) -- (0.564,-0.265);
\draw[edge] (0.000,1.172) -- (-2.520,-2.310);
\draw[dbl] (0.000,1.172) -- (-0.904,-0.740);
\draw[edge] (0.000,1.172) -- (2.520,-2.310);
\draw[edge] (-0.369,-1.359) -- (0.454,-0.987);
\draw[edge] (-0.369,-1.359) -- (0.564,-0.265);
\draw[edge] (-0.369,-1.359) -- (1.064,-1.390);
\draw[edge] (-0.369,-1.359) -- (-2.520,-2.310);
\draw[edge] (0.454,-0.987) -- (0.564,-0.265);
\draw[edge] (0.454,-0.987) -- (1.064,-1.390);
\draw[edge] (0.563,-1.923) -- (1.064,-1.390);
\draw[edge] (0.563,-1.923) -- (-2.520,-2.310);
\draw[edge] (0.563,-1.923) -- (2.520,-2.310);
\draw[edge] (0.000,2.310) -- (-2.520,-2.310);
\draw[edge] (0.000,2.310) -- (2.520,-2.310);
\draw[edge] (-0.062,0.129) -- (0.564,-0.265);
\draw[edge] (-0.062,0.129) -- (-0.904,-0.740);
\draw[edge] (0.564,-0.265) -- (1.064,-1.390);
\draw[edge] (0.564,-0.265) -- (-2.520,-2.310);
\draw[dbl] (0.564,-0.265) -- (-0.904,-0.740);
\draw[edge] (0.564,-0.265) -- (2.520,-2.310);
\draw[edge] (1.064,-1.390) -- (-2.520,-2.310);
\draw[edge] (1.064,-1.390) -- (2.520,-2.310);
\draw[edge] (-2.520,-2.310) -- (-0.904,-0.740);
\draw[dbl] (-2.520,-2.310) -- (2.520,-2.310);
\node[cyc] at (0.000,1.172) {$c_1$};
\node[cyc] at (-0.369,-1.359) {$c_2$};
\node[cyc] at (0.454,-0.987) {$c_3$};
\node[cyc] at (0.563,-1.923) {$c_4$};
\node[cyc] at (0.000,2.310) {$c_5$};
\node[clq] at (-0.062,0.129) {$k_1$};
\node[clq] at (0.564,-0.265) {$k_2$};
\node[clq] at (1.064,-1.390) {$k_3$};
\node[clq] at (-2.520,-2.310) {$k_4$};
\node[clq] at (-0.904,-0.740) {$k_5$};
\node[clq] at (2.520,-2.310) {$k_6$};
\node[ttl] at (0,-3.29) {$T_2$};
\end{scope}
\begin{scope}[xshift=11.749cm]
\draw[hedge] (0.000,2.218) -- (2.109,0.685);
\draw[hedge] (0.000,2.218) -- (-2.109,0.685);
\draw[hedge] (-1.303,-1.794) -- (-2.109,0.685);
\draw[hedge] (-1.303,-1.794) -- (1.303,-1.794);
\draw[hedge] (2.109,0.685) -- (1.303,-1.794);
\node[cyc] at (0.000,2.218) {$c_1$};
\node[cyc] at (-1.303,-1.794) {$c_2$};
\node[cyc] at (2.109,0.685) {$c_3$};
\node[cyc] at (-2.109,0.685) {$c_4$};
\node[cyc] at (1.303,-1.794) {$c_5$};
\node[clq] at (0.000,1.008) {$k_1$};
\node[clq] at (-0.873,0.504) {$k_2$};
\node[clq] at (-0.873,-0.504) {$k_3$};
\node[clq] at (-0.000,-1.008) {$k_4$};
\node[clq] at (0.873,-0.504) {$k_5$};
\node[clq] at (0.873,0.504) {$k_6$};
\node[ttl] at (0,-3.29) {$H=\overline{U}$};
\end{scope}
\end{tikzpicture}
\caption{Sulanke's graph $C_5\vee K_6$ as an exact union of two sphere triangulations $T_1$ and $T_2$ on the same 11 vertices (cycle vertices filled, clique vertices hollow).
The four edges that lie in both layers are drawn thick; the complement $H$ of the union is a 5-cycle on the cycle vertices (the complement of $C_5$) plus six isolated vertices, a triangle-free graph with $55-2\cdot 27+4=5$ edges.
The shaded triangle $c_4c_5k_5$ of $T_1$ is not a face: it separates $k_2$ from the other vertices.}
\label{fig:exactunion}
\end{figure}
By Lemma~\ref{lem:enlarge}, Theorem~\ref{thm:nineteen} follows once no biplanar exact union on 19 vertices has independence number at most~2.
Likewise, if $G$ has a \emph{universal vertex}, that is, a vertex adjacent to all other vertices, then so has $U\supseteq G$, and Lemma~\ref{lem:apex} follows once no biplanar exact union on 18 vertices with a universal vertex has independence number at most~2.
The next lemma gives necessary conditions on the complements of such exact unions.

\begin{lemma}[the filter]
\label{lem:filter}
Let $U$ be a biplanar exact union on $19$ vertices with $\alpha(U)\le 2$ and let $H=\overline{U}$.
Then $H$ is triangle-free, has maximum degree at most $8$, has no independent set of size $9$, and $69\le\ee(H)\le 76$.
\end{lemma}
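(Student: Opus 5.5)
We verify the five conditions one at a time; all but the edge bounds are immediate, and those come from counting edges of $U=T_1\cup T_2$ against $\binom{19}{2}=171$.

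\emph{Triangle-free.} Since $\alpha(U)\le 2$, the complement $H$ is triangle-free, as noted at the start of this section.

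\emph{Maximum degree.} For a vertex $v$, triangle-freeness of $H$ makes $N_H(v)$ independent in $H$, hence a clique in $U$. Together with $v$ itself this gives no extra clique, since $v$ is not adjacent in $U$ to $N_H(v)$. But a clique of size $9$ in $U$ would be a biplanar $K_9$ by (X1), since $U$ is biplanar and induced subgraphs of planar layers are planar. This contradicts (X3), so $\deg_H(v)\le 8$.

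\emph{No independent set of size 9.} An independent set of size $9$ in $H$ is a $K_9$ in $U$, excluded in the same way.

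\emph{Lower edge bound.} $U$ is the union of two triangulations with $3\cdot 19-6=51$ edges each, so $\ee(U)\le 102$. Hence $\ee(H)\ge 171-102=69$.

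\emph{Upper edge bound.} This is the step needing care. The naive bound $\ee(H)\le 19\cdot 8/2=76$ already follows from the maximum-degree bound, and it is exactly the claimed upper bound. So it suffices to double the degree bound: $2\ee(H)=\sum_v \deg_H(v)\le 152$.

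I expect no real obstacle. The only subtlety is making sure the $K_9$ argument uses (X1) correctly: the restrictions of the two planar layers to the $9$ vertices are planar and cover the clique. Alternatively, one could note $\ee(U)\ge 51$, but that gives only the weaker bound $\ee(H)\le 120$, so the degree argument is the one to use.
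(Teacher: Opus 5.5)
Your proof is correct and follows the paper's argument essentially step for step. You get triangle-freeness from $\alpha(U)\le 2$, exclude $K_9$ in $U$ via (X1) and (X3) for both the degree bound and the independence bound, and count edges against $171$. The only cosmetic difference is in the lower bound: the paper writes $\ee(H)=69+d$, with $d$ the number of edges lying in both triangulations, as an exact identity, whereas you use the inequality $\ee(U)\le 102$, which is all the lemma needs.
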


\begin{proof}
$H$ is triangle-free since $\alpha(U)\le 2$.
An independent set of $H$ is a clique of $U$, and $U$ contains no $K_9$ since $U$ is biplanar, by~(X1) and~(X3).
For a vertex $v$, the neighborhood of $v$ in $H$ is independent in $H$, hence a clique of $U$, so $\deg_H(v)\le 8$.
Let $d$ be the number of edges that lie in both triangulations.
Then $\ee(U)=2\cdot(3\cdot 19-6)-d=102-d$ and $\ee(H)=171-\ee(U)=69+d$; the degree bound gives $\ee(H)\le 19\cdot 8/2=76$.
\end{proof}

The same counting excludes orders above~19 and gives the filter for Lemma~\ref{lem:apex}, the \emph{universal-vertex case}.

\begin{lemma}[orders and the universal-vertex case]
\label{lem:orders}
The following hold.
\begin{enumerate}
\item A biplanar graph with independence number at most $2$ has at most $19$ vertices.
\item Let $U$ be a biplanar exact union on $18$ vertices with $\alpha(U)\le 2$ and a vertex $u$ adjacent to all other vertices, and let $H=\overline{U}$.
Then $u$ is isolated in $H$, $H$ is triangle-free, $\deg_H(v)\le 7$ for every vertex $v\ne u$, $H$ has no independent set of size $9$, and $57\le\ee(H)\le 59$.
\end{enumerate}
\end{lemma}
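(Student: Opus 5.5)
For part~1 the counting from the proof of Lemma~\ref{lem:filter} carries over to an arbitrary order $n$. Let $G$ be biplanar on $n$ vertices with $\alpha(G)\le 2$ and put $H=\overline{G}$. Then $H$ is triangle-free, so for every vertex $v$ the neighborhood $N_H(v)$ is independent in $H$, hence a clique of $G$. By~(X1) and~(X3) this clique has at most $8$ vertices, so $\deg_H(v)\le 8$ and $\ee(H)\le 4n$. For $n\ge 3$ a biplanar graph has at most $6n-12$ edges; I would derive this from~(X2) by extending each layer to a triangulation with $3n-6$ edges. Combining the two bounds gives
\[
\binom{n}{2}=\ee(G)+\ee(H)\le 6n-12+4n,
\]
that is, $n^2-21n+24\le 0$. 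The left side is positive for $n=20$ ($400-420+24=4$), and it is increasing for $n\ge 11$, so $n\le 19$. The small cases $n<4$ are trivial.

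For part~2 I would argue directly, as in Lemma~\ref{lem:filter}. Since $u$ is adjacent to all other vertices in $U$, it is isolated in $H$, and $H$ is triangle-free because $\alpha(U)\le 2$. An independent set of $H$ is a clique of $U$, so (X1) and~(X3) exclude independent sets of size $9$. For the degree bound, take $v\ne u$. The set $N_H(v)$ is a clique of $U$ not containing $u$, and $u$ is adjacent to all of it, so $N_H(v)\cup\{u\}$ is a clique of $U$. This clique has at most $8$ vertices, hence $\deg_H(v)\le 7$.

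For the edge count, let $d$ be the number of edges common to the two triangulations. Each triangulation has $3\cdot 18-6=48$ edges, so $\ee(U)=96-d$. Both triangulations must contain all $17$ edges at $u$. Indeed, in a sphere triangulation on $n\ge 4$ vertices every vertex has degree at least $3$, and the link of $u$ is a single cycle. So if $u$ is universal in $U$, I cannot immediately conclude that each layer contains all $17$ edges at $u$. The lower bound should therefore come from $d\ge 0$ alone, giving $\ee(H)=153-96+d=57+d\ge 57$. For the upper bound, the degree bound gives $\ee(H)\le 17\cdot 7/2$, that is, $\ee(H)\le 59$ after rounding down. So $57\le \ee(H)\le 59$.

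I expect the main obstacle to be bookkeeping rather than ideas. Two points need care: the version of the $6n-12$ bound used in part~1 should be the one derivable from~(X2), which needs $n\ge 4$, and part~2 must not assume that $u$ is universal in each layer separately.
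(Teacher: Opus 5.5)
Your proof is correct and follows the paper's argument essentially line for line. In part~1, the bound $\deg_H\le 8$ and the $6n-12$ bound from (X2) give $n^2-21n+24\le 0$; in part~2, the clique $N_H(v)\cup\{u\}$ gives $\deg_H(v)\le 7$, and $\ee(H)=57+d$ with $d\ge 0$, together with $\lfloor 17\cdot 7/2\rfloor=59$, gives the edge range. Do delete the stray sentence asserting that both triangulations contain all $17$ edges at $u$: it is unjustified (it would force $d\ge 17$ and hence $\ee(H)\ge 74>59$, settling Lemma~\ref{lem:apex} with no computation), and your final argument rightly does not use it.
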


\begin{proof}
For the first part, let $G$ be biplanar on $n\ge 4$ vertices with $\alpha(G)\le 2$ and $H=\overline{G}$; for~$n\le 3$ there is nothing to prove.
As above, $H$ has maximum degree at most $8$, so $\ee(H)\le 4n$, while $\ee(G)\le 6n-12$ by~(X2) gives $\ee(H)\ge\binom{n}{2}-6n+12$.
Hence $n^2-21n+24\le 0$, which fails for~$n\ge 20$, since the left side is $4$ at $n=20$ and increases from there.
For the second part, $u$ is isolated in $H$ because it is adjacent to all other vertices in $U$, and $H$ is triangle-free without an independent set of size $9$ as in Lemma~\ref{lem:filter}.
The neighborhood of a vertex $v\ne u$ in $H$ together with $u$ is independent in $H$, hence a clique of $U$, so $\deg_H(v)\le 7$ and $\ee(H)\le\lfloor 17\cdot 7/2\rfloor=59$.
Each triangulation has $3\cdot 18-6=48$ edges, so $\ee(U)\le 96$ and $\ee(H)\ge\binom{18}{2}-96=57$.
\end{proof}

\section{Enumeration of the candidates}
\label{sec:enum}

A SAT solver decides whether a propositional formula in conjunctive normal form (\cnf, a conjunction of clauses, each a disjunction of literals) has a satisfying assignment, called a \emph{model}.
We encode the conditions of Lemma~\ref{lem:filter} as such a formula over the $171$ edge variables of a graph on 19 vertices and auxiliary variables: for each triple of vertices a clause forbidding all three edges, for each 9-subset a clause requiring one of its edges, and sequential counters (a standard \cnf encoding of cardinality bounds with auxiliary variables) for the degree bounds and for the edge count.
We enumerate its models up to isomorphism with SAT modulo symmetries (SMS)~\citep{kirchwegerszeider2024,szeider2025}.
SMS runs a conflict-driven clause learning (CDCL) solver on the formula and, during the search, adds \emph{symmetry-breaking clauses}: whenever SMS detects that the partial adjacency matrix under construction cannot be extended to one that is lexicographically minimal (with respect to a fixed order of the vertex pairs) among all its relabelings, a clause that excludes the current partial assignment is learned, together with the vertex permutation that witnesses the non-minimality.
Every isomorphism class has exactly one lexicographically minimal member, and that member survives the search; since SMS aborts its minimality test after a bounded number of steps~\citep{kirchwegerszeider2024}, a class may also contribute further models.
The run took 37 minutes and produced 3278 graphs, which fall into 3271 isomorphism classes; for the universal-vertex case of Lemma~\ref{lem:orders}, with the conditions of its second part and an isolated vertex required, SMS produced 4 graphs, which are pairwise non-isomorphic.
A plain enumeration with \texttt{geng}~\citep{mckay1998} from the nauty package~\citep{nauty} of all triangle-free graphs on 19 vertices with maximum degree at most 8 and 69 to 76 edges (1,757,679 graphs), filtered by the independence condition, gave the same 3271 graphs.

Neither SMS nor \texttt{geng} is trusted.
The completeness of the list is established in \Lean by the following argument, which uses the \Lean library LeanSMS~\citep{kirchwegermanriqueszeider2026}, ported to \Lean~4.30 and extended for enumeration (included in the archive~\citep{zenodo}).
It is stated for order 19 and run verbatim for order 18, with Lemma~\ref{lem:orders}(2) in place of Lemma~\ref{lem:filter} and the four candidates of order 18 in place of the 3271 candidates.
Let $F$ be the filter formula, $S$ the conjunction of the symmetry-breaking clauses that SMS emitted (179,420 clauses for order 19 and 29,076 for order~18), and $B$ the conjunction, over all found graphs (3278 for order 19), of a clause that is false exactly when the edge variables describe that labeled graph, whatever the auxiliary variables.
\begin{enumerate}
\item The encoding $F$ is defined in \Lean, with a proof that every graph satisfying the conditions of Lemma~\ref{lem:filter} gives a model of $F$ (including values for the auxiliary variables of the counters).
\item Each clause of $S$ comes with its permutation $\pi$.
A decidable syntactic test on the clause and $\pi$ is evaluated in \Lean, and a theorem states that a clause passing the test holds for every lexicographically minimal graph, because every labeled graph that falsifies the clause is mapped by $\pi$ to a lexicographically smaller graph.
All clauses pass.
\item $F\wedge S\wedge B$ is unsatisfiable: \CaDiCaL refutes it in 43 minutes for order 19 (385,088 clauses) and in 98 seconds for order 18, and the LRAT proof, stored in the archive, is fed from that file into the streaming checker of Section~\ref{sec:exact}.
\item For each found graph, an explicit vertex permutation onto one of the 3271 candidates is checked in \Lean.
\end{enumerate}
Consequently, every graph satisfying the conditions of Lemma~\ref{lem:filter} is isomorphic to its lexicographically minimal relabeling, which satisfies $F$ by 1, satisfies $S$ by 2, hence violates $B$ by 3, hence has the edge set of one of the found graphs, hence is isomorphic to a candidate by 4.
This certifies that the list covers all graphs satisfying the filter, which is all the proof needs; that its 3271 graphs are pairwise non-isomorphic and all satisfy the filter is checked only outside \Lean.
The SMS propagator, the component that detects non-minimal partial graphs and emits the symmetry-breaking clauses, is not trusted: only the clauses it emitted enter the argument, and each of them is verified in step~2.
This is the certification scheme of earlier SMS work~\citep[Sec.~8]{kirchwegerszeider2024}, in which an external program checks the clauses and a DRAT checker checks the refutation; here both checks run in \Lean, as in LeanSMS~\citep{kirchwegermanriqueszeider2026}.
The minimality of the found graphs is not used either.

\section{The exact-union test}
\label{sec:exact}

Fix a candidate $H$ on $n\in\{18,19\}$ vertices and let $U=\overline{H}$.
We encode necessary conditions for ``$U$ is an exact union'' as a propositional formula $\Phi(U)$.
Every exact union yields a model of $\Phi(U)$; the converse is not needed, because an unsatisfiable formula already excludes an exact union.
The clauses of $\Phi(U)$ use the following variables: for each layer $i\in\{1,2\}$, that is, each of the two triangulations, and each edge $uv$ of $U$ an edge variable $e^i_{uv}$, for each layer and each triangle $uvw$ of $U$ a face variable $f^i_{uvw}$, and for each edge $uv$ of $U$ a variable $d_{uv}$ meaning that $uv$ lies in both layers.
The clauses state that a face implies its three edges; that an edge $uv$ of layer $i$ lies in exactly two faces $f^i_{uvw}$, and a non-edge in none; that every edge of $U$ lies in at least one layer; and that $d_{uv}$ is equivalent to $e^1_{uv}\wedge e^2_{uv}$.
Sequential counters state that each layer has exactly $3n-6$ edges and that exactly $2(3n-6)-\ee(U)$ edges are doubled.
Finally, every vertex has an edge in each layer, and, to break the symmetry between the layers, the first edge of $U$ lies in layer~1.
The implication from faces to edges is deliberately not an equivalence, since a triangle of a triangulation need not be a face.

A pair of sphere triangulations with union $U$ induces a satisfying assignment of $\Phi(U)$, after exchanging the two layers if necessary, but a model of $\Phi(U)$ need not come from triangulations: the local conditions do not force a layer to be connected, nor every vertex link to be a single cycle.
To eliminate such models we strengthen $\Phi(U)$ by two families of clauses that hold for every pair of sphere triangulations, added lazily as \emph{cuts}; for a list $C$ of cuts we write $\Phi_C(U)$ for $\Phi(U)$ together with $C$.
The encoding in fact has an edge variable for every pair and a face variable for every triple of vertices, and cuts may mention any of them.
A \emph{link cut} for a vertex $v$, a cyclic sequence $c_1,\dots,c_k$ of $k\ge 3$ distinct vertices, a vertex $w\notin\{c_1,\dots,c_k\}$, and a layer $i$ is the clause
\[
\neg f^i_{v c_1 c_2}\vee\neg f^i_{v c_2 c_3}\vee\dots\vee\neg f^i_{v c_k c_1}\vee\neg e^i_{vw}:
\]
if the faces around $v$ close up along the cycle, then $v$ has no further neighbor in that layer, because the link of $v$ is a single cycle, so it contains no vertex besides $c_1,\dots,c_k$.
If $v$ lies on the cycle or~$w=v$, the clause contains a literal $\neg f^i$ or $\neg e^i$ of a degenerate triple or pair, which is never a face or an edge, so the clause holds trivially; the soundness proof covers these cases.
A \emph{connectivity cut} for a nonempty proper vertex subset $X$ and a layer $i$ is the clause $\bigvee\SB e^i_{uv}\SM u\in X,\ v\notin X,\ uv\in E(U)\SE$.
A search loop outside \Lean solves $\Phi_C(U)$, starting with $C$ empty, decodes a model, adds cuts that the model violates to $C$, and repeats until $\Phi_C(U)$ is unsatisfiable; 2596 of the 3275 instances (3271 of order 19 and 4 of order 18) need no cut, and the largest number of cuts needed by an instance is 3682.

The encoding is a \Lean function that maps an instance (the edge list of $U$) and a list of cuts to a \cnf formula, and the solver input is emitted from this function.
Its soundness theorem states: if the instance is well formed, every cut in the list is well formed (a Boolean check that the cycle is a list of at least three pairwise distinct vertices, $w$ is not on it, $X$ is nonempty and proper, and all indices are in range), and $\Phi_C(U)$ is unsatisfiable, then $U$ is not an exact union.
The proof constructs from two sphere triangulations with union $U$ the assignment that makes each edge and face variable true exactly when the edge or face belongs to the layer, so that a pair outside $U$ or a triple that is not a face is false, and shows that it satisfies every clause, including every well-formed cut and the auxiliary variables of the counters; it uses only the properties listed in~(X2).

We solved the 3271 instances of order 19 and the 4 instances of order 18 with \CaDiCaL~3.0.0~\citep{cadical3,cadical} and the options \texttt{-{}-lrat -{}-no-binary -{}-no-factor}, producing LRAT proofs.
All 3275 final formulas $\Phi_C(U)$ are unsatisfiable.
Each LRAT proof is streamed through a pipe into a \Lean process that checks it against the formula produced by the \Lean encoder.
The checker is the verified LRAT checker of the \Lean standard library~\citep{boving2025}, made resumable and fed from the pipe by lrat-catcher~\citep{lratcatcher}; the result is a \Lean theorem stating that the formula is unsatisfiable, and no proof file is stored (an earlier run that stored them needed 754~GB).
The whole run took 74 CPU-hours, executed in parallel on one compute node, with at most 3.3~GB of memory per \Lean process.
The 3275 per-instance theorems are aggregated into one theorem about the list of candidates, which the final theorems consume.
Together with Lemmas~\ref{lem:enlarge} to~\ref{lem:orders} and the completeness of the enumeration, this proves Theorem~\ref{thm:nineteen} and Lemma~\ref{lem:apex}.
An independent implementation of the exact-union test, written without access to our code, reproduced the verdicts for the 3271 candidates of order 19 and, like the \texttt{geng} run of Section~\ref{sec:enum}, the candidate list.

\section{The \Lean statements}
\label{sec:lean}

The development uses Mathlib, the mathematical library of \Lean~\citep{mathlib2020}.
In the listings, \lstinline|Fin n| is the vertex set $\{0,\dots,n-1\}$, \lstinline|#f| and \lstinline|ncard| are cardinalities, and for graphs \lstinline|≤| is edge inclusion, \lstinline|⊔| edge union (not the join), \lstinline|⊤| the complete graph, and \lstinline|G.comap f| the graph induced on the image of the embedding \lstinline|f|, relabeled.
Planarity is an arbitrary predicate \lstinline|Planar| on simple graphs with vertex set \lstinline|Fin n|, and the facts~(X1) to~(X3) are the four fields of a structure, with~(X1) split into a field for subgraphs and one for induced subgraphs.

\begin{lstlisting}
structure SphereTriangulation (n : ℕ) where
  graph : SimpleGraph (Fin n)
  faces : Finset (Finset (Fin n))
  face_card : ∀ f ∈ faces, #f = 3
  face_clique : ∀ f ∈ faces, graph.IsClique (f : Set (Fin n))
  card_edges : graph.edgeSet.ncard = 3 * n - 6
  edge_two_faces : ∀ u v, graph.Adj u v →
    #{f ∈ faces | u ∈ f ∧ v ∈ f} = 2
  link_connected : ∀ v a b, graph.Adj v a → graph.Adj v b →
    (linkGraph faces v).Reachable a b
  connected : graph.Connected

variable (Planar : ∀ {n : ℕ}, SimpleGraph (Fin n) → Prop)

def Biplanar {n : ℕ} (G : SimpleGraph (Fin n)) : Prop :=
  ∃ P₁ P₂ : SimpleGraph (Fin n), Planar P₁ ∧ Planar P₂ ∧ G ≤ P₁ ⊔ P₂

structure PlanarityFacts : Prop where
  mono : ∀ {n : ℕ} {G G' : SimpleGraph (Fin n)},
    G ≤ G' → Planar G' → Planar G
  comap : ∀ {m n : ℕ} (f : Fin m ↪ Fin n) (G : SimpleGraph (Fin n)),
    Planar G → Planar (G.comap f)
  triangulation : ∀ {n : ℕ} (G : SimpleGraph (Fin n)), 4 ≤ n →
    Planar G → ∃ T : SphereTriangulation n, G ≤ T.graph ∧ Planar T.graph
  k9 : ¬ Biplanar Planar (⊤ : SimpleGraph (Fin 9))
\end{lstlisting}

Here \lstinline|linkGraph faces v| is the link of $v$ from Section~\ref{sec:reduction}, taken as a graph on all of \lstinline|Fin n|; by \lstinline|edge_two_faces| it is 2-regular on the neighbors of $v$, so \lstinline|link_connected| makes it a single cycle, which is the cycle condition of~(X2).
For $n\ge 4$, the structure \lstinline|SphereTriangulation| is the combinatorial object of Section~\ref{sec:reduction}; the field \lstinline|triangulation| supplies its planarity.
Ordinary planarity satisfies \lstinline|PlanarityFacts| by the classical facts~(X1) to~(X3), so the theorems below are not vacuous; this instantiation is not formalized.
The final theorems read as follows: \lstinline|earth_moon_19'| is Theorem~\ref{thm:nineteen}, \lstinline|earth_moon_18'| with its hypothesis \lstinline|hu| is Lemma~\ref{lem:apex} (the unprimed versions take the completeness of the enumeration as a hypothesis, the primed ones discharge it), and \lstinline|G.IndepSetFree 3| means that \lstinline|G| has no independent set of size $3$.

\noindent\begin{minipage}{\linewidth}
\begin{lstlisting}
theorem earth_moon_19' {Planar : ∀ {n : ℕ}, SimpleGraph (Fin n) → Prop}
    (hP : PlanarityFacts Planar) (G : SimpleGraph (Fin 19))
    (hB : Biplanar Planar G) : ¬ G.IndepSetFree 3

theorem earth_moon_18' {Planar : ∀ {n : ℕ}, SimpleGraph (Fin n) → Prop}
    (hP : PlanarityFacts Planar) (G : SimpleGraph (Fin 18))
    (hB : Biplanar Planar G) (u : Fin 18)
    (hu : ∀ w, w ≠ u → G.Adj u w) : ¬ G.IndepSetFree 3
\end{lstlisting}
\end{minipage}

For \lstinline|earth_moon_19'|, the command \lstinline|#print axioms|, which reports the axioms a theorem depends on, lists the standard logical axioms \lstinline|propext|, \lstinline|Classical.choice|, and \lstinline|Quot.sound|, and 6554 axioms generated by \lstinline|native_decide| (6553 for \lstinline|earth_moon_18'|), one for each Boolean check that was evaluated by compiled code: the well-formedness of each of the 3275 instances and its cuts and each LRAT check, which both theorems share through the aggregate, the checks of the symmetry-breaking clauses, and the checks on the enumeration data.
For a streamed proof, the proof enters \Lean only through an opaque constant that supplies its bytes; the soundness theorem of the checker holds for every such input, and the axiom records only that the compiled check returned true on the bytes that arrived~\citep{lratcatcher}.
The Boolean checks are \Lean functions whose soundness is proved, so what is trusted is only their evaluation by compiled code, a weaker guarantee than evaluation in the kernel, the small core of \Lean that checks proofs.

\begin{table}[t]
\centering
\small
\begin{tabular}{@{}lp{3.6in}@{}}
\toprule
Verified in \Lean & the reduction (Section~\ref{sec:reduction}); the partition arithmetic of Section~\ref{sec:main}; completeness of the filter encoding; validity of every symmetry-breaking clause; the matching of found graphs to candidates; soundness of the exact-union encoding and of every cut; every LRAT refutation; the aggregation over all 3275 instances \\
\midrule
Trusted & the \Lean kernel and, through \lstinline|native_decide|, the \Lean compiler and runtime; the streaming reader of lrat-catcher; the identity of the encoder sources against which the per-instance certificates and the final theorems were compiled (checked byte for byte, not by \Lean); the facts~(X1) to~(X3) \\
\midrule
Not trusted & \CaDiCaL, SMS, \texttt{geng}, the cut search loop, all scripts, the independent implementation \\
\bottomrule
\end{tabular}
\caption{What is verified in \Lean, what is trusted, and what is not trusted. The argument of Section~\ref{sec:main} outside \texttt{partition\_arith} is on paper.}
\label{tab:trust}
\end{table}

\paragraph{The archive.}
The Zenodo record~\citep{zenodo} contains the \Lean sources, the 3275 instances with their cuts and logs, the enumeration data and the two enumeration proofs, and all scripts.
It describes five levels of checking.
The cheapest builds the libraries and prints the axioms of the reduction (half an hour).
The next re-emits all formulas from the \Lean encoder and compares hashes, re-verifies the symmetry-breaking clauses, and checks the enumeration proofs with an independent LRAT checker (about an hour).
The remaining levels rerun the enumeration (about 1.5 hours), rerun all refutations (74 CPU-hours), and run adversarial controls that corrupt a cut, a proof, a permutation, or a symmetry-breaking clause and confirm that the corresponding check fails.

\section{Proof of Theorem~\ref{thm:main}}
\label{sec:main}

Let $G$ be a biplanar graph with $\alpha(G)\le 2$ and suppose $\chi(G)\ge 10$.
Choose an induced subgraph~$G'$ of~$G$ with $\chi(G')\ge 10$ that is minimal with respect to vertex deletion, let $n$ be its order, and let~$H=\overline{G'}$.
Then $\chi(G')=10$ and $\chi(G'-v)=9$ for every vertex $v$, $G'$ is biplanar by~(X1), $\alpha(G')\le 2$, and $H$ is triangle-free.
By Lemma~\ref{lem:orders} and Theorem~\ref{thm:nineteen}, $n\le 18$.

For a graph $F$ with $\alpha(F)\le 2$ we have $\chi(F)=|V(F)|-\match(\overline{F})$: the color classes of a coloring have size at most $2$, the classes of size $2$ form a matching of $\overline{F}$, and conversely a matching of $\overline{F}$ with $k$ edges gives a coloring with $|V(F)|-k$ colors.
Hence $\match(H)=n-10$ and $\match(H-v)=(n-1)-9=\match(H)$ for every vertex $v$ (i.e., every vertex of $H$ is missed by some maximum matching).
By the Gallai--Edmonds structure theorem~\citep{gallai1964,edmonds1965} (see~\citep[Thm.~3.2.1]{lovaszplummer1986}), the vertices missed by some maximum matching induce a subgraph whose components are factor-critical.
Here these are all vertices of $H$, so every connected component of $H$ is \emph{factor-critical}, that is, deleting any one of its vertices leaves a graph with a perfect matching.
A factor-critical graph has odd order, and a triangle-free one is either a single vertex or has at least $5$ vertices; a factor-critical component of order $m$ has a maximum matching of size $(m-1)/2$.
Let $H$ have $r$ isolated vertices and $k$ components $H_1,\dots,H_k$ of odd orders $m_1\ge\dots\ge m_k\ge 5$.
Then $\match(H)=(n-r-k)/2$, and $\match(H)=n-10$ gives $r+k=20-n\ge 2$; since $5k\le n\le 18$, we have $k\le 3$.
The graph $G'$ is the join of $K_r$ and the complements $\overline{H_1},\dots,\overline{H_k}$.

We use three bounds.
First, $G'$ contains no $K_9$ by~(X1) and~(X3), and a maximum clique of $G'$ consists of the $r$ isolated vertices and a maximum independent set of each $H_j$; as $H_j$ is triangle-free on at least $5$ vertices, $\alpha(H_j)\ge 2$, and this gives $r+2k\le 8$.
Second, for a vertex $v$ of $H_j$, the neighborhood of $v$ in $H_j$, the $r$ isolated vertices, and two non-adjacent vertices from each other component form an independent set of $H$, hence a clique of $G'$, so $\deg_{H_j}(v)+r+2(k-1)\le 8$, that is, $\deg_{H_j}(v)\le 8-r-2(k-1)$ and therefore $\ee(H_j)\le\lfloor m_j\cdot(8-r-2(k-1))/2\rfloor$.
Third, $\binom{n}{2}-\sum_{j=1}^{k}\ee(H_j)=\ee(G')\le 6n-12$.
If $k=0$, then $n=r$ and $r+k=20-n$ give $r=10$, contradicting $r+2k\le 8$.
For $k\ge 1$, put $D=8-r-2(k-1)$ and $L(n)=\binom{n}{2}-6n+12$.
The second and third bounds give
\[
L(n)\le\sum_{j=1}^{k}\ee(H_j)\le M=\sum_{j=1}^{k}\lfloor m_jD/2\rfloor.
\]
Since every $m_j$ is odd and $\sum_j m_j=n-r$, the value $M$ depends only on $r$ and $k$: it is $D(n-r)/2$ for even $D$ and $(D(n-r)-k)/2$ for odd $D$.
The constraints $n=20-r-k\le 18$, $n-r\ge 5k$, and $r+2k\le 8$ leave the following cases.
\begin{center}
\small
\setlength{\tabcolsep}{4pt}
\begin{tabular}{@{}l*{13}{r}@{}}
\toprule
$k$ & 1 & 1 & 1 & 1 & 1 & 1 & 2 & 2 & 2 & 2 & 2 & 3 & 3 \\
$r$ & 1 & 2 & 3 & 4 & 5 & 6 & 0 & 1 & 2 & 3 & 4 & 0 & 1 \\
$n$ & 18 & 17 & 16 & 15 & 14 & 13 & 18 & 17 & 16 & 15 & 14 & 17 & 16 \\
\midrule
$L(n)$ & 57 & 46 & 36 & 27 & 19 & 12 & 57 & 46 & 36 & 27 & 19 & 46 & 36 \\
$M$ & 59 & 45 & 32 & 22 & 13 & 7 & 54 & 39 & 28 & 17 & 10 & 34 & 21 \\
\bottomrule
\end{tabular}
\end{center}
Only the first case has $L(n)\le M$, and it gives $n=18$, $r=1$, $k=1$, and $m_1=17$.
This case analysis is the \Lean theorem \lstinline|partition_arith|.
In the remaining case, $G'$ has $18$ vertices and a universal vertex, which Lemma~\ref{lem:apex} excludes.
Hence $\chi(G)\le 9$.
The order bound of Theorem~\ref{thm:main} follows since a graph with $\alpha\le 2$ and $\chi\le 9$ has at most $18$ vertices; it also follows directly from Theorem~\ref{thm:nineteen} and~(X1).
The two parts of Corollary~\ref{cor:alpha} are the contrapositives of the order bound and of the coloring bound, respectively.

\section{Concluding remarks}
\label{sec:remarks}

The bounds 9 and 12 on the largest chromatic number of a biplanar graph are untouched; the route through independence number 2, which produced Sulanke's graph and the 17-vertex examples of Section~\ref{sec:intro}, is closed.
By Corollary~\ref{cor:alpha}, a 10-chromatic biplanar graph has independence number at least 3.
The smallest candidate of that kind named in the literature is $C_7[K_4]$, the 7-cycle with every vertex replaced by a $K_4$, on 28 vertices, suggested by Thomassen~\citep{gethner2018}.
It is not biplanar: the edges between adjacent copies of $K_4$ form a triangle-free graph with 112 edges, while two triangle-free planar graphs on 28 vertices have at most $2(2\cdot 28-4)=104$ edges together~\citep[Question~7.1]{trivedi2026}.
Its thickness is exactly 3.
Replacing every vertex of $C_7[K_2]$ by a $K_2$ gives $C_7[K_4]$, every subgraph of $C_7[K_2]$ on $h$ vertices has at most $3(h-1)$ edges, so $C_7[K_2]$ has arboricity at most 3, and replacing every vertex of a graph of arboricity $k$ by a $K_2$ gives a graph of thickness at most $k$~\citep[Proposition~2]{albertsonboutingethner2010}.
Gethner and Sulanke~\citep[Open Problem~1(b)]{gethnersulanke2009} ask more generally for a biplanar graph on 28 vertices with $K_4$\hy free complement.
The method of this note transfers to that question in principle, since neither the enlargement nor the exact-union encoding depends on the order, but the enumeration would be far larger, and we do not know whether it is feasible.

Two smaller questions remain open.
The first is Open Problem~2 of Gethner and Sulanke~\citep{gethnersulanke2009}: find a biplanar graph with independence ratio below $2/17$.
A biplanar graph on 18 vertices with independence number 2 would have ratio $1/9<2/17$; by Theorem~\ref{thm:main}, 18 is the only order left for an example with independence number 2, since smaller orders give ratio at least $2/17$, and the conditions of Lemma~\ref{lem:filter} adapt to order 18.
The second is the question of Kr\'al'~\citep[Problem~4]{barbados2018} whether every biplanar graph on $n$ vertices has $\alpha\ge cn$ for some constant $c>1/12$.
Corollary~\ref{cor:alpha} gives $\alpha\ge 3$ for every biplanar graph on at least 19 vertices, where degeneracy alone gives only $\lceil n/12\rceil=2$ for $19\le n\le 24$, but this does not settle the linear question.

The same combination of a \Lean-defined encoder, LRAT proofs streamed into the checker of \Lean, and an enumeration whose completeness is certified through its symmetry-breaking clauses applies to other statements of the form ``no graph with property $P$ exists,'' provided SMS can enumerate the candidates and the property has a propositional encoding with a soundness proof.

\section*{Acknowledgments}
This work was supported by the Austrian Science Fund (FWF), project 10.55776/P36688.
The author used AI assistance for implementing the computation, drafting the text, and adversarial review, and takes full responsibility for the content.

\bibliographystyle{abbrvurl}
\bibliography{references}

\begin{thebibliography}{10}

\bibitem{alalqam2012}
R.~Alalqam.
\newblock Heuristic methods applied to difficult graph theory problems.
\newblock Master's thesis, University of Colorado Denver, 2012.

\bibitem{albertsonboutingethner2010}
M.~O. Albertson, D.~L. Boutin, and E.~Gethner.
\newblock The thickness and chromatic number of {$r$}-inflated graphs.
\newblock {\em Discret. Math.}, 310(20):2725--2734, 2010.
\newblock \href {https://doi.org/10.1016/J.DISC.2010.04.019}
  {\path{doi:10.1016/J.DISC.2010.04.019}}.

\bibitem{barbados2018}
Open problems for the {B}arbados {G}raph {T}heory {W}orkshop 2018.
\newblock
  \url{https://web.math.princeton.edu/~pds/barbados18/openproblems2018.pdf},
  2018.

\bibitem{bhk1962}
J.~Battle, F.~Harary, and Y.~Kodama.
\newblock Every planar graph with nine points has a nonplanar complement.
\newblock {\em Bulletin of the American Mathematical Society}, 68(6):569--571,
  1962.
\newblock \href {https://doi.org/10.1090/S0002-9904-1962-10850-7}
  {\path{doi:10.1090/S0002-9904-1962-10850-7}}.

\bibitem{cadical}
A.~Biere, T.~Faller, K.~Fazekas, M.~Fleury, N.~Froleyks, and F.~Pollitt.
\newblock {CaDiCaL} 2.0.
\newblock In A.~Gurfinkel and V.~Ganesh, editors, {\em Computer Aided
  Verification - 36th International Conference, {CAV} 2024, Montreal, QC,
  Canada, July 24-27, 2024, Proceedings, Part {I}}, volume 14681 of {\em
  Lecture Notes in Computer Science}, pages 133--152. Springer, 2024.
\newblock \href {https://doi.org/10.1007/978-3-031-65627-9_7}
  {\path{doi:10.1007/978-3-031-65627-9_7}}.

\bibitem{biniaz2022}
A.~Biniaz.
\newblock A short proof of the non-biplanarity of {$K_9$}.
\newblock {\em J. Graph Algorithms Appl.}, 26(1):75--80, 2022.
\newblock \href {https://doi.org/10.7155/JGAA.00582}
  {\path{doi:10.7155/JGAA.00582}}.

\bibitem{boutingethnersulanke2008}
D.~L. Boutin, E.~Gethner, and T.~Sulanke.
\newblock Thickness-two graphs part one: New nine-critical graphs, permuted
  layer graphs, and {C}atlin's graphs.
\newblock {\em J. Graph Theory}, 57(3):198--214, 2008.
\newblock \href {https://doi.org/10.1002/JGT.20282}
  {\path{doi:10.1002/JGT.20282}}.

\bibitem{boving2025}
H.~B{\"{o}}ving, S.~Bhat, L.~Cicolini, A.~C. Keizer, L.~Fr{\'{e}}not,
  A.~Mohamed, L.~Stefanesco, H.~Khan, J.~Clune, C.~W. Barrett, and T.~Grosser.
\newblock Interactive bitvector reasoning using verified bit-blasting.
\newblock {\em Proc. {ACM} Program. Lang.}, 9({OOPSLA2}):3259--3285, 2025.
\newblock \href {https://doi.org/10.1145/3763167} {\path{doi:10.1145/3763167}}.

\bibitem{brakensiek2022}
J.~Brakensiek, M.~Heule, J.~Mackey, and D.~E. Narv{\'{a}}ez.
\newblock The resolution of {Keller's} conjecture.
\newblock {\em J. Autom. Reason.}, 66(3):277--300, 2022.
\newblock \href {https://doi.org/10.1007/S10817-022-09623-5}
  {\path{doi:10.1007/S10817-022-09623-5}}.

\bibitem{clune2023}
J.~Clune.
\newblock A formalized reduction of {Keller's} conjecture.
\newblock In R.~Krebbers, D.~Traytel, B.~Pientka, and S.~Zdancewic, editors,
  {\em Proceedings of the 12th {ACM} {SIGPLAN} International Conference on
  Certified Programs and Proofs, {CPP} 2023, Boston, MA, USA, January 16-17,
  2023}, pages 90--101. {ACM}, 2023.
\newblock \href {https://doi.org/10.1145/3573105.3575669}
  {\path{doi:10.1145/3573105.3575669}}.

\bibitem{cruzfilipe2017}
L.~Cruz{-}Filipe, M.~J.~H. Heule, W.~A. Hunt, Jr., M.~Kaufmann, and
  P.~Schneider{-}Kamp.
\newblock Efficient certified {RAT} verification.
\newblock In L.~de~Moura, editor, {\em Automated Deduction - {CADE} 26 - 26th
  International Conference on Automated Deduction, Gothenburg, Sweden, August
  6-11, 2017, Proceedings}, volume 10395 of {\em Lecture Notes in Computer
  Science}, pages 220--236. Springer, 2017.
\newblock \href {https://doi.org/10.1007/978-3-319-63046-5_14}
  {\path{doi:10.1007/978-3-319-63046-5_14}}.

\bibitem{demoura2021}
L.~de~Moura and S.~Ullrich.
\newblock The {Lean} 4 theorem prover and programming language.
\newblock In A.~Platzer and G.~Sutcliffe, editors, {\em Automated Deduction -
  {CADE} 28 - 28th International Conference on Automated Deduction, Virtual
  Event, July 12-15, 2021, Proceedings}, volume 12699 of {\em Lecture Notes in
  Computer Science}, pages 625--635. Springer, 2021.
\newblock \href {https://doi.org/10.1007/978-3-030-79876-5_37}
  {\path{doi:10.1007/978-3-030-79876-5_37}}.

\bibitem{dsw2016}
V.~Dujmovic, A.~Sidiropoulos, and D.~R. Wood.
\newblock Layouts of expander graphs.
\newblock {\em Chic. J. Theor. Comput. Sci.}, 2016, 2016.
\newblock URL:
  \url{http://cjtcs.cs.uchicago.edu/articles/2016/1/contents.html}.

\bibitem{edmonds1965}
J.~Edmonds.
\newblock Paths, trees, and flowers.
\newblock {\em Canadian Journal of Mathematics}, 17:449--467, 1965.
\newblock \href {https://doi.org/10.4153/CJM-1965-045-4}
  {\path{doi:10.4153/CJM-1965-045-4}}.

\bibitem{flower2017}
J.~Flower.
\newblock Heuristic search methods for discovering thickness $n$ graphs.
\newblock Master's thesis, University of Colorado Denver, 2017.

\bibitem{gallai1964}
T.~Gallai.
\newblock Maximale {S}ysteme unabh{\"{a}}ngiger {K}anten.
\newblock {\em Magyar Tud. Akad. Mat. Kutat{\'{o}} Int. K{\"{o}}zl.},
  9:401--413, 1964.

\bibitem{gallicchio2026}
J.~Gallicchio, C.~R. Codel, J.~Avigad, and M.~J.~H. Heule.
\newblock An end-to-end verification of {Keller's} conjecture.
\newblock In E.~Komendantskaya and T.~Nipkow, editors, {\em 17th International
  Conference on Interactive Theorem Proving, {ITP} 2026, Lisbon, Portugal, July
  26-29, 2026}, volume 382 of {\em LIPIcs}, pages 26:1--26:20. Schloss Dagstuhl
  - Leibniz-Zentrum f{\"{u}}r Informatik, 2026.
\newblock \href {https://doi.org/10.4230/LIPICS.ITP.2026.26}
  {\path{doi:10.4230/LIPICS.ITP.2026.26}}.

\bibitem{gardner1980}
M.~Gardner.
\newblock Mathematical {G}ames.
\newblock {\em Scientific American}, 242(2):14--21, Feb. 1980.
\newblock \href {https://doi.org/10.1038/scientificamerican0280-14}
  {\path{doi:10.1038/scientificamerican0280-14}}.

\bibitem{gethner2018}
E.~Gethner.
\newblock To the {M}oon and beyond.
\newblock In R.~Gera, T.~W. Haynes, and S.~T. Hedetniemi, editors, {\em Graph
  Theory: Favorite Conjectures and Open Problems -- 2}, Problem Books in
  Mathematics, pages 115--133. Springer, 2018.
\newblock \href {https://doi.org/10.1007/978-3-319-97686-0_11}
  {\path{doi:10.1007/978-3-319-97686-0_11}}.

\bibitem{gethnersulanke2009}
E.~Gethner and T.~Sulanke.
\newblock Thickness-two graphs part two: More new nine-critical graphs,
  independence ratio, cloned planar graphs, and singly and doubly outerplanar
  graphs.
\newblock {\em Graphs Comb.}, 25(2):197--217, 2009.
\newblock \href {https://doi.org/10.1007/S00373-008-0833-5}
  {\path{doi:10.1007/S00373-008-0833-5}}.

\bibitem{hendreywood2019}
K.~Hendrey and D.~R. Wood.
\newblock Defective and clustered choosability of sparse graphs.
\newblock {\em Comb. Probab. Comput.}, 28(5):791--810, 2019.
\newblock \href {https://doi.org/10.1017/S0963548319000063}
  {\path{doi:10.1017/S0963548319000063}}.

\bibitem{heulescheucher2024}
M.~J.~H. Heule and M.~Scheucher.
\newblock Happy ending: An empty hexagon in every set of 30 points.
\newblock In B.~Finkbeiner and L.~Kov{\'{a}}cs, editors, {\em Tools and
  Algorithms for the Construction and Analysis of Systems - 30th International
  Conference, {TACAS} 2024, Held as Part of the European Joint Conferences on
  Theory and Practice of Software, {ETAPS} 2024, Luxembourg City, Luxembourg,
  April 6-11, 2024, Proceedings, Part {I}}, volume 14570 of {\em Lecture Notes
  in Computer Science}, pages 61--80. Springer, 2024.
\newblock \href {https://doi.org/10.1007/978-3-031-57246-3_5}
  {\path{doi:10.1007/978-3-031-57246-3_5}}.

\bibitem{hutchinson1993}
J.~P. Hutchinson.
\newblock Coloring ordinary maps, maps of empires, and maps of the {M}oon.
\newblock {\em Mathematics Magazine}, 66(4):211--226, 1993.
\newblock \href {https://doi.org/10.1080/0025570X.1993.11996124}
  {\path{doi:10.1080/0025570X.1993.11996124}}.

\bibitem{jacksonringel2000}
B.~Jackson and G.~Ringel.
\newblock Variations on {R}ingel's earth-moon problem.
\newblock {\em Discret. Math.}, 211:233--242, 2000.
\newblock \href {https://doi.org/10.1016/S0012-365X(99)00278-2}
  {\path{doi:10.1016/S0012-365X(99)00278-2}}.

\bibitem{kirchwegermanriqueszeider2026}
M.~Kirchweger, P.~Manrique, and S.~Szeider.
\newblock Formally verified graph generation with {SAT} modulo symmetries and
  {Lean}.
\newblock In A.~Biere, C.~Lutz, and S.~Negri, editors, {\em Automated Reasoning
  - 13th International Joint Conference, {I\kern0pt JCAR} 2026, Lisbon,
  Portugal, July 26-29, 2026, Proceedings, Part {I}}, volume 16688 of {\em
  Lecture Notes in Computer Science}, pages 117--135. Springer, 2026.
\newblock \href {https://doi.org/10.1007/978-3-032-32589-1_8}
  {\path{doi:10.1007/978-3-032-32589-1_8}}.

\bibitem{kirchwegerscheucherszeider2023}
M.~Kirchweger, M.~Scheucher, and S.~Szeider.
\newblock {SAT}-based generation of planar graphs.
\newblock In M.~Mahajan and F.~Slivovsky, editors, {\em 26th International
  Conference on Theory and Applications of Satisfiability Testing, {SAT} 2023,
  Alghero, Italy, July 4-8, 2023}, volume 271 of {\em LIPIcs}, pages
  14:1--14:18. Schloss Dagstuhl - Leibniz-Zentrum f{\"{u}}r Informatik, 2023.
\newblock \href {https://doi.org/10.4230/LIPICS.SAT.2023.14}
  {\path{doi:10.4230/LIPICS.SAT.2023.14}}.

\bibitem{kirchwegerszeider2024}
M.~Kirchweger and S.~Szeider.
\newblock {SAT} modulo symmetries for graph generation and enumeration.
\newblock {\em {ACM} Trans. Comput. Log.}, 25(3):1--30, 2024.
\newblock \href {https://doi.org/10.1145/3670405} {\path{doi:10.1145/3670405}}.

\bibitem{lovaszplummer1986}
L.~Lov{\'{a}}sz and M.~D. Plummer.
\newblock {\em Matching Theory}, volume 121 of {\em North-Holland Mathematics
  Studies}.
\newblock North-Holland, Amsterdam, 1986.

\bibitem{mansfield1983}
A.~Mansfield.
\newblock Determining the thickness of graphs is {NP}-hard.
\newblock {\em Mathematical Proceedings of the Cambridge Philosophical
  Society}, 93(1):9--23, 1983.
\newblock \href {https://doi.org/10.1017/S030500410006028X}
  {\path{doi:10.1017/S030500410006028X}}.

\bibitem{mckay1998}
B.~D. McKay.
\newblock Isomorph-free exhaustive generation.
\newblock {\em J. Algorithms}, 26(2):306--324, 1998.
\newblock \href {https://doi.org/10.1006/JAGM.1997.0898}
  {\path{doi:10.1006/JAGM.1997.0898}}.

\bibitem{nauty}
B.~D. McKay and A.~Piperno.
\newblock Practical graph isomorphism, {II}.
\newblock {\em J. Symb. Comput.}, 60:94--112, 2014.
\newblock \href {https://doi.org/10.1016/J.JSC.2013.09.003}
  {\path{doi:10.1016/J.JSC.2013.09.003}}.

\bibitem{mutzelodenthalscharbrodt1998}
P.~Mutzel, T.~Odenthal, and M.~Scharbrodt.
\newblock The thickness of graphs: {A} survey.
\newblock {\em Graphs Comb.}, 14(1):59--73, 1998.
\newblock \href {https://doi.org/10.1007/PL00007219}
  {\path{doi:10.1007/PL00007219}}.

\bibitem{cadical3}
F.~Pollitt, M.~Fleury, K.~Fazekas, N.~Froleyks, A.~Schidler, D.~Schreiber, and
  A.~Biere.
\newblock {CaDiCaL} 3.0 (tool paper).
\newblock In A.~Ignatiev and S.~Szeider, editors, {\em 29th International
  Conference on Theory and Applications of Satisfiability Testing, {SAT} 2026,
  Lisbon, Portugal, July 20-23, 2026}, volume 377 of {\em LIPIcs}, pages
  40:1--40:14. Schloss Dagstuhl - Leibniz-Zentrum f{\"{u}}r Informatik, 2026.
\newblock \href {https://doi.org/10.4230/LIPICS.SAT.2026.40}
  {\path{doi:10.4230/LIPICS.SAT.2026.40}}.

\bibitem{ringel1959}
G.~Ringel.
\newblock {\em F{\"{a}}rbungsprobleme auf Fl{\"{a}}chen und Graphen}, volume~2
  of {\em Mathematische Monographien}.
\newblock VEB Deutscher Verlag der Wissenschaften, Berlin, 1959.

\bibitem{subercaseauxheule2023}
B.~Subercaseaux and M.~J.~H. Heule.
\newblock The packing chromatic number of the infinite square grid is 15.
\newblock In S.~Sankaranarayanan and N.~Sharygina, editors, {\em Tools and
  Algorithms for the Construction and Analysis of Systems - 29th International
  Conference, {TACAS} 2023, Held as Part of the European Joint Conferences on
  Theory and Practice of Software, {ETAPS} 2023, Paris, France, April 22-27,
  2023, Proceedings, Part {I}}, volume 13993 of {\em Lecture Notes in Computer
  Science}, pages 389--406. Springer, 2023.
\newblock \href {https://doi.org/10.1007/978-3-031-30823-9_20}
  {\path{doi:10.1007/978-3-031-30823-9_20}}.

\bibitem{subercaseaux2024}
B.~Subercaseaux, W.~Nawrocki, J.~Gallicchio, C.~R. Codel, M.~Carneiro, and
  M.~J.~H. Heule.
\newblock Formal verification of the empty hexagon number.
\newblock In Y.~Bertot, T.~Kutsia, and M.~Norrish, editors, {\em 15th
  International Conference on Interactive Theorem Proving, {ITP} 2024, Tbilisi,
  Georgia, September 9-14, 2024}, volume 309 of {\em LIPIcs}, pages
  35:1--35:19. Schloss Dagstuhl - Leibniz-Zentrum f{\"{u}}r Informatik, 2024.
\newblock \href {https://doi.org/10.4230/LIPICS.ITP.2024.35}
  {\path{doi:10.4230/LIPICS.ITP.2024.35}}.

\bibitem{szeider2025}
S.~Szeider.
\newblock {SAT} modulo symmetries: {A} survey.
\newblock In M.~Erascu and M.~Janota, editors, {\em Proceedings of the 10th
  International Workshop on Satisfiability Checking and Symbolic Computation
  (SC-Square 2025) Collocated with The 30th International Conference on
  Automated Deduction {(CADE} 2025), Stuttgart, Germany, August 2, 2025},
  volume 4116 of {\em {CEUR} Workshop Proceedings}, pages 1--11. CEUR-WS.org,
  2025.
\newblock URL: \url{https://ceur-ws.org/Vol-4116/invited1.pdf}.

\bibitem{zenodo}
S.~Szeider.
\newblock Biplanar graphs with independence number two are 9-colorable: {Lean}
  development, certificates, and reproduction instructions.
\newblock Zenodo, 2026.
\newblock \href {https://doi.org/10.5281/zenodo.22913639}
  {\path{doi:10.5281/zenodo.22913639}}.

\bibitem{lratcatcher}
S.~Szeider.
\newblock Streaming {LRAT} certificates into {Lean} theorems, 2026.
\newblock \href {https://arxiv.org/abs/2607.00815} {\path{arXiv:2607.00815}}.

\bibitem{tanheulemyreen2023}
Y.~K. Tan, M.~J.~H. Heule, and M.~O. Myreen.
\newblock Verified propagation redundancy and compositional {UNSAT} checking in
  {CakeML}.
\newblock {\em Int. J. Softw. Tools Technol. Transf.}, 25(2):167--184, 2023.
\newblock \href {https://doi.org/10.1007/S10009-022-00690-Y}
  {\path{doi:10.1007/S10009-022-00690-Y}}.

\bibitem{mathlib2020}
{The mathlib Community}.
\newblock The {Lean} mathematical library.
\newblock In J.~Blanchette and C.~Hritcu, editors, {\em Proceedings of the 9th
  {ACM} {SIGPLAN} International Conference on Certified Programs and Proofs,
  {CPP} 2020, New Orleans, LA, USA, January 20-21, 2020}, pages 367--381.
  {ACM}, 2020.
\newblock \href {https://doi.org/10.1145/3372885.3373824}
  {\path{doi:10.1145/3372885.3373824}}.

\bibitem{trivedi2026}
A.~Trivedi.
\newblock Biplanar clique blow-ups of {$C_5$} are 9-colourable.
\newblock Zenodo, August~5, 2026.
\newblock Preprint.
\newblock \href {https://doi.org/10.5281/zenodo.22074335}
  {\path{doi:10.5281/zenodo.22074335}}.

\bibitem{tutte1963}
W.~T. Tutte.
\newblock The non-biplanar character of the complete 9-graph.
\newblock {\em Canadian Mathematical Bulletin}, 6(3):319--330, 1963.
\newblock \href {https://doi.org/10.4153/CMB-1963-026-x}
  {\path{doi:10.4153/CMB-1963-026-x}}.

\bibitem{weaver2023}
R.~C. Weaver.
\newblock Utilizing graph thickness heuristics on the {E}arth-moon problem.
\newblock {\em Rose-Hulman Undergraduate Mathematics Journal}, 24(2), 2023.
\newblock Article 5.
\newblock URL: \url{https://scholar.rose-hulman.edu/rhumj/vol24/iss2/5}.

\end{thebibliography}
\end{document}